\documentclass[12pt]{article}
\usepackage{graphicx} 
\usepackage{epstopdf}
\usepackage[utf8]{inputenc}
\usepackage{subfigure}
\usepackage{color}
\usepackage[english]{babel}
\usepackage{amsmath,amsthm}
\usepackage{amssymb}
\usepackage{amsfonts}
\usepackage{booktabs}
\usepackage{graphicx}
\usepackage{epstopdf}
\usepackage{listings}
\usepackage{xcolor}

\usepackage{graphicx}
\usepackage{subfigure}
\usepackage{tabularx}
\newtheorem{theorem}{Theorem}

\lstdefinestyle{PythonStyle}{
    language=Python,
    basicstyle=\ttfamily\small,
    keywordstyle=\color{blue},
    commentstyle=\color{gray},
    stringstyle=\color{red},
    showstringspaces=false,
    breaklines=true,
    numbers=left,
    numberstyle=\tiny\color{gray},
    frame=shadowbox,
    rulesepcolor=\color{gray},
    captionpos=b,
    escapeinside={(*@}{@*)}
}

\newtheorem{thm}{Theorem}[section]
\newtheorem{defi}{Definition}[section]

\newtheorem{lem}[thm]{Lemma}

\theoremstyle{plain}

\title{ The degeneracy and Alon-Tarsi number under $F$-sum operations}

\author { Zhiguo Li{$^*$}, Zhentao Jiao, Zeling Shao\\
	{\small School of Science, Hebei University of Technology, Tianjin 300401, China}
	\date{}
	\footnote{Corresponding author. E-mail: zhiguolee@hebut.edu.cn}
	\footnote{This work was supported in part by the National Natural Science Foundation of China (No. 12571345) and the Natural Science Foundation of Hebei Province, China (No. A2021202013).}
}

\begin{document}
	\baselineskip 0.65cm
	
	\maketitle
	
	\begin{abstract}

The Alon-Tarsi number of a graph $ G $ is the smallest $ k $ such that there exists an orientation $ D $ of $ G $ with maximum outdegree $ k - 1 $ satisfying that the number of even Eulerian subgraphs is different from the number of odd Eulerian subgraphs. The degeneracy of a graph $ G $ is the maximum value of the minimum degree over all subgraphs of $ G $. In this paper, we obtain a characterization of graphs with $AT(G)=2$ for any graph $G$, and study the Alon-Tarsi number of $F$-sum in terms of degeneracy.
		
		\bigskip
		\noindent\textbf{Keywords:} Alon-Tarsi number; $k$-degenerate; chromatic number; $F$-sum. \\

		\noindent\textbf{2000 MR Subject Classification.} 05C15
	\end{abstract}
\section{Introduction}

In this paper, we confine our attention solely to simple and finite graphs. The chromatic number of a graph $ G $, denoted by $ \chi(G) $, is defined as the smallest positive integer $ k $ for which $ G $ admits a proper vertex coloring using exactly $ k $ colors. List coloring represents a well-established variation of vertex coloring. In the context of list coloring, a $ k $-list assignment for a graph $ G $ is a function $ L $ that assigns to each vertex $ v $ of $ G $ a set $ L(v) $ containing $ k $ proper colors. An $ L $-coloring of $ G $ refers to a coloring $ f $ of $ G $ where $ f(v) $ is an element of $ L(v) $ for every vertex $ v $. We say that $ G $ is $ L $-colorable if there exists a proper $ L $-coloring of $ G $. A graph $ G $ is said to be $ k $-choosable if it is $ L $-colorable for every possible $ k $-list assignment $ L $. The choice number of a graph $ G $, denoted by $ ch(G) $, is the smallest positive integer $ k $ for which $ G $ is $ k $-choosable.

A subdigraph $ H $ of a directed graph $ D $ is called Eulerian if the indegree $ d^-_H(v) $ of every vertex $ v $ in $ H $ is equal to its outdegree $ d^+_H(v) $. We do not assume that $ H $ is connected. $ H $ is even if the number of arcs of $ H $ is even, otherwise, it is odd. Let $ E_e(D) $ and $ E_o(D) $ denote the families of even and odd Eulerian subgraphs of $ D $, respectively. Let $ \text{diff}(D) = |E_e(D)| - |E_o(D)| $. We say that $ D $ is Alon-Tarsi if $ \text{diff}(D) \neq 0 $. If an orientation $ D $ of $ G $ yields an Alon-Tarsi digraph, then we say $ D $ is an Alon-Tarsi orientation (or an AT-orientation, for short) of $ G $.

The Alon-Tarsi number of $ G $ ($ AT(G) $, for short) is the smallest $ k $ such that there exists an orientation $ D $ of $ G $ with max outdegree $ k - 1 $ satisfying the number of Eulerian subgraphs with even arcs different from the number of Eulerian subgraphs with odd arcs. It was proposed by Alon and Tarsi$^{[1]}$, subsequently, they used algebraic methods to prove that $ \chi(G) \leq ch(G) \leq AT(G) $. The graph $ G $ is called chromatic-choosable if $ \chi(G) = ch(G) $. The graph $ G $ is called chromatic AT choosable if $ \chi(G) = AT(G) $.

Graph theory is replete with a variety of operations, including product, complement, addition, switching, subdivision, and deletion. These operations are instrumental in elucidating the properties of complex graphs by examining their simpler analogues. Within the domain of chemical graph theory, these operations are especially valuable for constructing elaborate molecular structures from basic components. Recently, a plethora of molecular structure classes have been thoroughly examined by employing these graph operations. Moreover, the $F$-sum, a notion that stems from the Cartesian product, will be explored in Section 2.

In this article, we obtained the exact value of the degeneracy of $ F $-sum Graph and derived an upper bound for its Alon-Tarsi number in Section 3. Meanwhile, in Section 3, we also obtained the Alon-Tarsi number of subdivision graph and $ S $-sum graph.

\section{Preliminaries}
The following will introduce the relevant background, definitions, lemmas, and corollaries derived from the lemmas.

In the in-depth study of chemistry and mathematics, the Wiener index has emerged as a prominent concept. After further exploration, four new operations based on the Wiener index have been proposed. We will present the relevant definitions and then proceed with further research.

\begin{defi}$^{[2]}$ For a connected graph $ G $, the definitions of four related graphs are as follows.

(1) $ S(G) $ is the \textit{subdivision graph} of $ G $, which is obtained by inserting an additional vertex into each edge of $ G $. In other words, each edge of $ G $ is replaced by a path of length 2.

(2) $ R(G) $ is the \textit{triangle parallel graph} of $ G $, which is constructed by adding a new vertex corresponding to each edge of $ G $ and connecting each new vertex to the endpoints of the corresponding edge.

(3) $ Q(G) $ is the \textit{line superposition graph} of $ G $, which is obtained by first performing $ S(G) $ and then connecting the new vertices if and only if the corresponding edges in $ G $ are adjacent. 

(4) $ T(G) $ is the \textit{total graph} of $ G $, which is constructed by first performing $ R(G) $ and then connecting the new vertices corresponding to adjacent edges in $ G $, in other words, the graph $ T(G) $ obtained by combining $ R(G) $ and $ Q(G) $(see Figure 1 for $P_4,S(P_4),R(P_4),Q(P_4),T(P_4)$).
\end{defi}

\begin{figure}[htbp]
	\centering
	\includegraphics[height=7.5cm, width=0.56\textwidth]{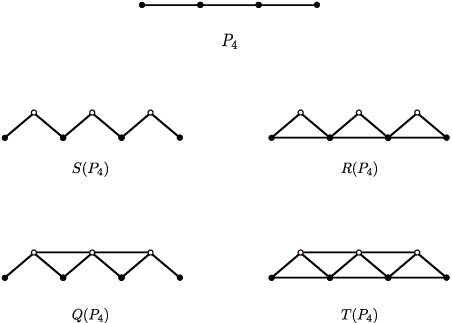}
	\caption{ $P_4$ and $S(P_4)$, $R(P_4)$, $Q(P_4)$, $T(P_4).$}
\end{figure}

\begin{defi}$^{[4]}$ The Cartesian product of graphs $ G $ and $ H $,  denoted by $ G \square H $, is the graph with vertex set $ V(G) \times V(H) $ and edges created such that $ (u, v) $ is adjacent to $ (u', v') $ if and only if either $ u = u' $ and $ vv' \in E(H) $ or $ v = v' $ and $ uu' \in E(G) $(see Figure 2 for $G = P_2,H = C_3$).
\end{defi}

\begin{figure}[htbp]
	\centering
	\includegraphics[height=5cm, width=0.57\textwidth]{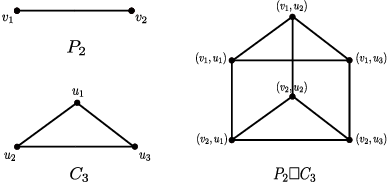}
	\caption{$P_2,C_3$ and $ P_2 \square C_3 $ .}
\end{figure}

Note that $ G \square H $ contains $ |V(G)| $ copies of $ H $ and $ |V(H)| $ copies of $ G $. There are some results concerning the Alon-Tarsi number of Cartesian product of graphs. Kaul and Mudrock proved that the Cartesian product of cycle and path has $ AT(C_k \square P_n) = 3 $ in[4]($ k $, $ n $ are integers). 

\begin{defi}$^{[5]}$ Let $ F \in \{ S, R, Q, T \} $. The $ F $-sum of $ G_1 $ and $ G_2 $, denoted by $ G_1 {+}_{F} G_2 $, is defined by $ F(G_1) \square G_2 - E^* $, where $ E^* = \{ (u, v_1)(u, v_2) \in E(F(G_1) \times G_2) : u \in V(F(G_1)) - V(G_1), v_1 v_2 \in E(G_2) \} $, i.e., $ G_1 {+}_{F} G_2 $ is a graph with the set of vertices $ V(G_1 {+}_{F} G_2) = (V(G_1) \cup E(G_1)) \times V(G_2) $ and two vertices $ (u_1, u_2) $ and $ (v_1, v_2) $ of $ G_1 {+}_{F} G_2 $ are adjacent if and only if $ [u_1 = v_1 \in V(G_1) \text{ and } u_2 v_2 \in E(G_2)] $ or $ [u_2 = v_2 \in V(G_2) \text{ and } u_1 v_1 \in E(F(G_1))] $(see Figure 3 for $G = P_4,H = P_3$).

\end{defi}

\begin{figure}[htbp]
	\centering
	\includegraphics[height=7.4cm, width=0.45\textwidth]{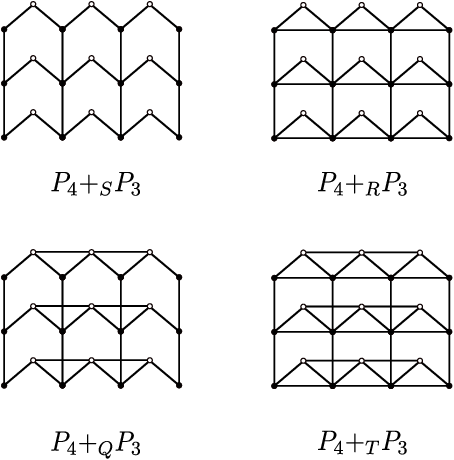}
	\caption{$ P_4 {+}_{F} P_3 $.}
\end{figure}

\begin{defi}$^{[6]}$
A graph is {$k$-degenerate} if its vertices can be successively deleted so that each deleted vertex has degree at most $k$ at the time of deletion. The {degeneracy} of a graph is the smallest integer $k$ such that the graph is $k$-degenerate, denoted as $ d $. Similarly, the degeneracy is also defined: $d = \max\{\delta(H) : H \text{ is a subgraph of } G\}$.
\end{defi}

According to Reference~[9], we introduce a general iterative method for handling $ k $-degenerate problems. Let $ V_G^0 $ denote the set of vertices in graph $ G = G^0 $ with degree at most $ k $, $ |V(G)| = m $, $ |V(H)| = n $. Construct an iterative process: $ G^i = G^{i-1} - V_G^{i-1} $, where $ V_G^{i-1} $ denotes the set of vertices in graph $ G^{i-1} $ with degree at most $ k $, for $ 1 \leq i \leq m - |V_G^0| $. The iterative process terminates when graph $ G^{s+1} $ (for $ 1 \leq s \leq m - |V_G^0| $) is an empty graph, which implies that $ G $ is a $ k $-degenerate graph. The condition for the iterative process to continue is $ |V_G^{i-1}| \geq 1 $. 

It is easy to see that a $0$-degenerate graph is an empty graph, a $1$-degenerate graph is a forest, a planar graph is a $5$-degenerate graph, and a subgraph of a $k$-degenerate graph is still a $k$-degenerate graph.

\begin{defi}$^{[6]}$ The \textit{coloring number} of a graph $G$ is defined as
\[
\text{col}(G) = 1 + \max\{\delta(H) : H \text{ is a subgraph of } G\},
\]
where $\delta(H)$ stands for the minimum degree of $H$.

\end{defi}

Alternatively, the coloring number $\text{col}(G)$ is defined as the smallest integer $l$ such that there exists a linear ordering $<$ of the vertices of $G$. Under this ordering, each vertex $v$ has at most $l-1$ neighbors $u$ satisfying $u < v$. By directing each edge $uv$ as $(u,v)$ whenever $u > v$, we achieve an acyclic orientation $D$ of $G$, where the maximum out-degree $\delta_D^+$ equals $\text{col}(G) - 1 = d$. Consequently, for any graph  G , we obtain $ AT(G) \leq d + 1 \leq k + 1 $.

\begin{defi}$^{[12]}$ In standard graph theory, $\Theta_{a,b,c}$ (where $a, b, c$ are positive integers) usually denotes a generalized $\Theta$-graph, which is formed by connecting two vertices $u$ and $v$ with three internally disjoint paths whose lengths (number of edges) are $a, b, c$, respectively(see Figure 4 for $\Theta_{2,2,2}$ and $\Theta_{2,2,4}$).
\end{defi}

\begin{figure}[htbp]
	\centering
	\includegraphics[height=3.6cm, width=0.43\textwidth]{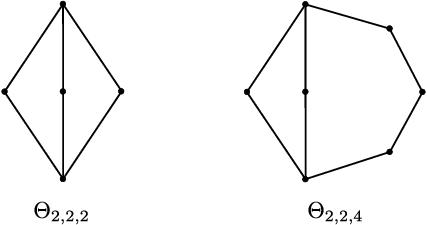}
	\caption{$\Theta_{2,2,2}$ and $\Theta_{2,2,4}$.}
\end{figure}

\begin{defi}$^{[12]}$ The core of a graph $ G $ is the subgraph obtained by successively pruning away vertices of degree 1 until no such vertices remain.
\end{defi}

\begin{lem}$^{[9]}$ The Cartesian product graph $ G \square H $ of a $ k $-degenerate graph $ G $ and an $ l $-degenerate graph $ H $ is $ (k + l) $-degenerate.
\end{lem}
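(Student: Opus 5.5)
We prove the lemma by building an elimination ordering of $G \square H$ out of elimination orderings of $G$ and $H$. By the definition of degeneracy, fix an ordering $x_1, x_2, \dots, x_m$ of $V(G)$ such that each $x_i$ has at most $k$ neighbours among $x_{i+1}, \dots, x_m$. This is the deletion sequence: when $x_i$ is deleted, only the later vertices remain. Similarly fix an ordering $y_1, \dots, y_n$ of $V(H)$ such that each $y_j$ has at most $l$ neighbours among $y_{j+1}, \dots, y_n$.

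We order $V(G \square H)$ lexicographically. We delete $(x_1, y_1), (x_1, y_2), \dots, (x_1, y_n)$, then $(x_2, y_1), \dots, (x_2, y_n)$, and so on. In other words, $(x_i, y_j)$ precedes $(x_{i'}, y_{j'})$ exactly when $i < i'$, or $i = i'$ and $j < j'$.

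Next we count the neighbours of a vertex that are still present when it is deleted. By the definition of the Cartesian product, every neighbour of $(x_i, y_j)$ is of one of two kinds.
\begin{itemize}
\item A neighbour $(x_i, y_{j'})$ with $y_j y_{j'} \in E(H)$. It remains at deletion time only if $j' > j$, so there are at most $l$ such neighbours.
\item A neighbour $(x_{i'}, y_j)$ with $x_i x_{i'} \in E(G)$. It remains only if $i' > i$, so there are at most $k$ such neighbours.
\end{itemize}
Hence each vertex has degree at most $k + l$ at the moment it is deleted, and $G \square H$ is $(k+l)$-degenerate.

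Alternatively, one can argue through the subgraph characterization $d = \max \delta(H')$. Take any subgraph $W$ of $G \square H$. Let $i$ be the smallest index with $x_i$ appearing in the first coordinate of some vertex of $W$. Among the vertices of $W$ with first coordinate $x_i$, choose one whose second-coordinate index $j$ is smallest. The same count shows this vertex has degree at most $k + l$ in $W$, so $\delta(W) \le k + l$.

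There is no real obstacle in this argument. The only point needing care is to use the definition of the Cartesian product correctly: no edge changes both coordinates, so the two kinds of neighbours listed above are exhaustive.
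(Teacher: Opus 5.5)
The paper gives no proof of this lemma. It is imported from reference [9], and the only related material in the paper is the layer-peeling procedure $G^i=G^{i-1}-V_G^{i-1}$ stated just before it, also attributed to [9]; that procedure removes all vertices of current degree at most $k$ simultaneously.

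Your proof is correct and complete for the paper's notion of $k$-degenerate, namely degeneracy at most $k$. Every edge of $G\square H$ changes exactly one coordinate. So when $(x_i,y_j)$ is deleted, its surviving neighbours in the $H$-fibre lie among the at most $l$ later neighbours of $y_j$, and those in the $G$-fibre lie among the at most $k$ later neighbours of $x_i$. Your minimum-degree variant is equally valid and matches the paper's alternative definition $d=\max\{\delta(W): W\subseteq G\}$.

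Compared with the batch-peeling formulation, your single-vertex elimination order is cleaner. It avoids the complication that vertices removed in the same round may be adjacent to each other. To phrase the argument in the paper's language, you would delete the product in blocks $V_G^i\times V_H^j$ in lexicographic order of $(i,j)$. A vertex in such a block has at most $l$ surviving $H$-fibre neighbours (they lie in $H^j$) and at most $k$ surviving $G$-fibre neighbours (they lie in $G^i$), which is your count again.

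If $k$ and $l$ are meant as exact degeneracies (the introduction speaks of exact values), you can add a one-line lower bound. Take subgraphs $G'\subseteq G$ and $H'\subseteq H$ with $\delta(G')=k$ and $\delta(H')=l$. Then $G'\square H'$ is a subgraph of $G\square H$ with minimum degree $k+l$, so the degeneracy of $G\square H$ is exactly $k+l$.
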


\begin{lem}$^{[7]}$ If $ G $ is a bipartite graph, then every orientation $ D $ of $ G $ is an AT-orientation. Moreover,
\[
AT(G) = \max_{\substack{\text{subgraph } H \text{ of } G}} \left\lceil \frac{|E(H)|}{|V(H)|} \right\rceil + 1.
\]
\end{lem}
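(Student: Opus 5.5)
The plan has two parts, matching the two claims of the lemma: that every orientation of a bipartite graph is an AT-orientation, and the formula for $AT(G)$.

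\emph{Every orientation is AT.} Let $D$ be any orientation of the bipartite graph $G$. Every Eulerian subdigraph $H$ of $D$ is an arc-disjoint union of directed cycles. Each directed cycle in $D$ is a cycle of the bipartite graph $G$, so it has even length. Hence $|E(H)|$ is even, so $E_o(D)=\emptyset$. The empty subgraph lies in $E_e(D)$, so $\text{diff}(D)=|E_e(D)|\ge 1$, and $D$ is Alon-Tarsi.

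\emph{Reduction of the formula.} By the first part, $AT(G)$ is the smallest $k$ such that $G$ has some orientation with maximum outdegree at most $k-1$. So it suffices to show
\[
\min_D \Delta^+(D)=\max_{H\subseteq G}\left\lceil \frac{|E(H)|}{|V(H)|}\right\rceil,
\]
where the minimum runs over all orientations $D$ of $G$. (If $G$ has no edges, the maximum is $0$, consistent with $AT(G)=1$.)

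\emph{Lower bound.} In any orientation $D$ and any subgraph $H$, the arcs of $H$ are counted once each by outdegrees inside $H$. Thus $|E(H)|=\sum_{v\in V(H)} d^+_{D[H]}(v)\le |V(H)|\,\Delta^+(D)$. This gives $\Delta^+(D)\ge \lceil |E(H)|/|V(H)|\rceil$ for every $H$.

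\emph{Upper bound (Hakimi's orientation theorem).} Set $m=\max_H \lceil |E(H)|/|V(H)|\rceil$. I would build a bipartite auxiliary graph $B$. One side is $E(G)$; the other side consists of $m$ copies of each vertex of $G$. Each edge $e=uv$ is joined to all copies of $u$ and all copies of $v$. An orientation of $G$ with outdegree at most $m$ is exactly a matching in $B$ saturating $E(G)$: assigning an edge to a copy of its tail. To get such a matching I check Hall's condition. Take $S\subseteq E(G)$ and let $H$ be the subgraph spanned by $S$. Then
\[
|N_B(S)|=m|V(H)|\ge |E(H)|=|S|,
\]
so Hall's theorem gives the required orientation.

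Combining the two bounds yields $AT(G)=m+1$. The only step needing care is the Hall/Hakimi construction. The main potential pitfall is ensuring the ceiling is applied correctly, so that integer outdegree bounds match. That is automatic because $m$ is an integer with $m\ge |E(H)|/|V(H)|$ for every $H$.
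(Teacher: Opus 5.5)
Your proof is correct. Bipartiteness makes every Eulerian subdigraph even, so $\text{diff}(D)\ge 1$ for every orientation, and the formula then reduces to Hakimi's orientation theorem, which your counting bound and Hall's-theorem construction prove in both directions. The paper states this lemma without proof, citing Alon and Tarsi~[7], and your argument is essentially the standard one from that source; the only blemish is calling the matching-to-orientation correspondence ``exact'' when it is many-to-one, which does not affect the argument.
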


\begin{lem}$^{[12]}$ A graph $G$ is 2-choosable if and only if the core of $G$
 belongs to $T = \{K_1, C_{2m+2}, \\ \Theta_{2,2,2m} : m \geq 1\}$.
\end{lem}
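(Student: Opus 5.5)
The plan follows the classical Erdős–Rubin–Taylor argument. It has three stages: reduction to the core, sufficiency, and necessity. Throughout I assume $G$ connected, as in [12]; for disconnected $G$ the statement is read componentwise.

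\textbf{Step 1: reduction to the core.} If $v$ is a vertex of degree $1$ in $G$, then $G$ is $2$-choosable if and only if $G-v$ is.
\begin{itemize}
\item One direction holds because a subgraph of a $2$-choosable graph is $2$-choosable.
\item For the other, take any $2$-list assignment $L$. First $L$-colour $G-v$. The unique neighbour of $v$ then forbids at most one colour, so a colour of $L(v)$ remains for $v$.
\end{itemize}
Iterating this over the pruning, $G$ is $2$-choosable if and only if its core is. The core is either $K_1$ (when $G$ is a tree) or has minimum degree at least $2$.

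\textbf{Step 2: sufficiency.} The case $K_1$ is trivial.

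For $C_{2m+2}$, I use the Lemma on bipartite graphs quoted above. Every subgraph $H$ of a cycle satisfies $|E(H)|\le |V(H)|$, so $AT(C_{2m+2})=2$. Since $ch\le AT$, the even cycle is $2$-choosable.

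The Lemma is too weak for $\Theta_{2,2,2m}$: it has $2m+2$ edges on $2m+1$ vertices, so it only gives $AT\le 3$. Here I would argue directly. Let $u,v$ be the branch vertices, $a,b$ the middle vertices of the two paths of length $2$, and $P$ the path of length $2m$.
\begin{itemize}
\item If all lists are identical, the bipartition gives a colouring.
\item Otherwise, the aim is to colour $u$ and $v$ so that each of $a,b$ still has an available colour, and then colour $P$ from one end. The last internal vertex of $P$ is the only danger, since both of its neighbours on $P$ are already coloured.
\item The case split is on whether $L(u)=L(v)$ and on how $L(a)$ and $L(b)$ meet $L(u)\cup L(v)$.
\item In the bad case, I use the even cycle $u\,a\,v\,P$. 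The standard fact is that an even cycle whose lists are not all equal can be coloured starting at a vertex whose list differs from its predecessor's. This lets me fix the colours of $u$ and $v$ compatibly with $b$ before going round $P$.
\end{itemize}
This is a finite case analysis and I expect it to be routine.

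\textbf{Step 3: necessity.} Let $G$ be connected with minimum degree $\ge 2$ and not in $T$. I show $G$ contains a subgraph that is not $2$-choosable, which suffices because choosability is monotone under subgraphs. Structurally, such a $G$ contains one of the following:
\begin{itemize}
\item an odd cycle, which is not even $2$-colourable;
\item two vertex-disjoint cycles joined by a path, or two cycles sharing exactly one vertex;
\item a theta graph $\Theta_{a,b,c}$ other than $\Theta_{2,2,2m}$;
\item $\Theta_{2,2,2m}$ together with an extra path joining two of its vertices, giving a larger even subgraph such as $\Theta_{2,2,2,2}$ or a $K_{2,3}$-plus-path configuration.
\end{itemize}
For each configuration I would write explicit bad lists in the Erdős–Rubin–Taylor style. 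For example, on two cycles joined by a path, the lists force the colour at the attachment vertex to alternate until a contradiction arises. For $\Theta_{2,2,2,2}$, the lists $\{1,2\},\{3,4\},\{1,3\},\{2,4\}$ on the four middle vertices, with suitable lists on the ends, block every choice.

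I expect Step 3 to be the main obstacle. The difficulty is proving that this list of minimal forbidden configurations is exhaustive, via an ear-decomposition argument on the $2$-core: start from a longest cycle, add one ear, then analyse a second ear. The explicit bad list assignments for each configuration are the other delicate part.
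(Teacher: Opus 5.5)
Your outline is the classical Erd\H{o}s--Rubin--Taylor argument: prune to the core, show $C_{2m+2}$ and $\Theta_{2,2,2m}$ are $2$-choosable, and give bad lists for the minimal obstructions, including $\Theta_{2,2,2,2m}$. That is exactly what the paper relies on, since it states this lemma with the citation [12] and gives no proof of its own. One small slip: $\Theta_{2,2,2m}$ has $2m+3$ vertices and $2m+4$ edges, not $2m+1$ and $2m+2$, but your conclusion that Lemma 2.2 only yields $AT(\Theta_{2,2,2m})=3$, and so cannot certify $2$-choosability, is unaffected.
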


\begin{lem} 
For a connected graph $G$ with at least one edge, $AT(G) = 2$ if and only if the core of $G$ belongs to $\{K_1, C_{2m+2}\}$.
\end{lem}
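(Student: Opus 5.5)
We prove the two directions separately. In both, the key observation is that removing a degree-1 vertex does not change the Alon--Tarsi number, provided at least one edge remains. So $AT(G)$ is governed by the core.

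\emph{Reduction to the core.} Let $v$ be a vertex of degree 1 in $G$, where $G$ has at least two edges. A pendant edge lies on no cycle, hence in no Eulerian subgraph. So if $D'$ is an orientation of $G-v$, extend it by orienting the pendant edge towards $v$ (so $v$ has out-degree 0). The Eulerian subgraphs of the resulting $D$ coincide with those of $D'$, so $\mathrm{diff}(D)=\mathrm{diff}(D')$, and the maximum out-degree is unchanged. Hence $AT(G)\le AT(G-v)$. Conversely, $AT$ is monotone under taking subgraphs: restricting an AT-orientation of $G$ to $G-v$ again preserves the Eulerian subgraphs. Hence $AT(G)=AT(G-v)$ whenever $G-v$ still has an edge. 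Iterating, $AT(G)=AT(\mathrm{core}(G))$, except when the core is $K_1$. In that case $G$ is a tree with at least one edge, so $AT(G)=2$: orient the tree away from a root, giving out-degree at most 1 with only the empty Eulerian subgraph, so $\mathrm{diff}=1$. Also $AT\ge 2$ because there is an edge.

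\emph{Sufficiency.} If the core is $K_1$, we are done by the above. If the core is $C_{2m+2}$, it is bipartite, so we apply Lemma 2.2. Every subgraph $H$ of a cycle has $|E(H)|\le |V(H)|$, so $\max\lceil |E(H)|/|V(H)|\rceil=1$, giving $AT=2$. Alternatively, orient the cycle cyclically: the Eulerian subgraphs are $\emptyset$ and the whole even cycle, so $\mathrm{diff}=2\neq 0$ with out-degree 1.

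\emph{Necessity.} Suppose $AT(G)=2$. Then $ch(G)\le AT(G)=2$ by Alon--Tarsi, so by Lemma 2.3 the core lies in $\{K_1, C_{2m+2}, \Theta_{2,2,2m}\}$. It remains to exclude $\Theta_{2,2,2m}$. Since $AT$ of the core equals $AT(G)$, it suffices to show $AT(\Theta_{2,2,2m})\ge 3$. This graph is bipartite: its cycles have lengths $4$, $2m+2$ and $2m+2$, all even. It has $2m+3$ vertices and $2m+4$ edges, so with $H=\Theta_{2,2,2m}$ itself Lemma 2.2 gives
\[
AT\ge \left\lceil \frac{2m+4}{2m+3}\right\rceil+1=3 .
\]
A simpler counting argument also works: an orientation with maximum out-degree 1 has at most $|V|<|E|$ arcs, which is impossible. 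This contradiction finishes the proof.

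The step that needs the most care is the reduction to the core, in particular checking that $AT$ is unchanged and the case distinctions at the end of the pruning. The degenerate situation is a single edge, whose core is $K_1$ but which still has $AT=2$. The remaining steps are immediate from Lemmas 2.2 and 2.3.
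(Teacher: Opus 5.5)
There is a genuine error in the sufficiency direction: in both places where you write down an explicit orientation, the arcs point the wrong way. In the reduction step you orient the pendant edge $uv$ towards $v$, making $v$ a sink. But then the neighbour $u$ gains an out-arc, $d^+_D(u)=d^+_{D'}(u)+1$, so ``the maximum out-degree is unchanged'' is false. It fails exactly in the case you need. If $D'$ is a cyclically oriented $C_4$ (maximum out-degree $1$) and $v$ is attached at $u$, your extension has $d^+_D(u)=2$ and certifies nothing about $AT(G)\le 2$.

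Similarly, orienting a tree \emph{away} from a root gives every vertex out-degree equal to its number of children, not at most $1$. For example, $K_{1,3}$ rooted at its centre gets out-degree $3$ there.

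The repair is to reverse both directions. Orient the pendant edge from $v$ to $u$, so $d^+_D(v)=1$ and every other out-degree is unchanged. Since $G-v$ has an edge, $\Delta^+(D')\ge 1$, hence $\Delta^+(D)=\Delta^+(D')$. The pendant arc still lies in no Eulerian subgraph (it would give $v$ out-degree $1$ and in-degree $0$), so $\mathrm{diff}(D)=\mathrm{diff}(D')$. For the tree, orient every edge towards the root, or simply iterate the corrected step down to $K_2$. This is exactly the paper's ``orienting all attached trees consistently towards the cycle.''

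With that fix your argument is correct, and it is organized a bit differently from the paper's. The paper proves sufficiency by observing that $G$ is a tree, an even cycle, or a unicyclic graph with an even cycle, and then exhibits one orientation (directed cycle plus trees oriented towards it). You instead prove $AT(G)=AT(G-v)$ for pendant $v$ and reduce to the core.

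Your route has the merit of making explicit the inequality $AT(G)\ge AT(\mathrm{core}(G))$. The paper's necessity argument uses this without comment when it passes from $AT(\Theta_{2,2,2m})\ge 3$ to excluding $\Theta_{2,2,2m}$ as the core. Note that your restriction argument for this inequality works only because $v$ is pendant; for an arbitrary subgraph, restricting an AT-orientation need not keep $\mathrm{diff}\neq 0$.

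Your counting bound $|E|=2m+4>2m+3=|V|$ for $\Theta_{2,2,2m}$ is also more elementary than invoking Lemma 2.2. Otherwise your necessity part, via $ch(G)\le AT(G)$ and Lemma 2.3, is the same as the paper's.
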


\noindent\textit{Proof:} 
We first prove the sufficiency. Since the core of $G$ belongs to $\{K_1, C_{2m+2}\}$, and by the definition of the core, the graph $G$ can only be a tree, an even cycle, or a unicycle(with the requirement that the cycle is an even cycle).

When $G$ is a tree or an even cycle, it is clear that $AT(G)=2$. When $G$ is a unicycle(with the requirement that the cycle is an even cycle), by orienting the even cycle as a directed cycle and orienting all attached trees consistently towards the cycle, we obtain an AT-orientation with maximum outdegree at most 1. Hence, $AT(G) \le 2$. Furthermore, since it is obvious that $AT(G) \ge 2$, we have $AT(G)=2$.

Now consider the necessity. Since $AT(G) = 2$, we have $ch(G) = 2$, and thus the graph $G$ is 2-choosable. By Lemma 2.3, the core of $G$ belongs to $T = \{K_1, C_{2m+2}, \Theta_{2,2,2m} : m \geq 1\}$. Moreover, $\Theta_{2,2,2m}$ (with $m \geq 1$) contains no odd cycle, and is therefore bipartite. Note that it has $2m+3$ vertices and $2m+4$ edges. By Lemma 2.2, we have $AT(\Theta_{2,2,2m}) \ge 3$, and hence the core of $G$ must belong to $\{K_1, C_{2m+2}\}$.

\noindent\textbf{Corollary 2.1.}  Let $ G $ be a connected graph that contains no odd cycles and has at least one edge, $|E(G)| = m$, and $|V(G)| = n$. Then $ AT(G) = 2 $ if and only if $ m \leq n $.

\noindent\textit{Proof:} We first prove the necessity. Since $ AT(G) = 2 $, we have $ \chi(G) = 2 $, and thus the graph $ G $ is bipartite. By Lemma~2.2, the conclusion follows immediately. We now prove the sufficiency. If the number of edges of $ G $ is less than or equal to the number of its vertices, and since any connected graph satisfies $ m \geq n - 1 $, it follows that either $ m = n - 1 $ or $ m = n $. When $ m = n - 1 $, $ G $ is a tree; when $ m = n $, $ G $ is either an even cycle or a unicyclic graph (with the requirement that the cycle is an even cycle). Consequently, the core of $ G $ belongs to $ \{K_1, C_{2m+2}\} $. By Lemma~2.4, we have $ AT(G) = 2 $.

\section{The main results}

\begin{theorem} For any $ l \in \mathbb{N^+} $, let $ G $ be an arbitrary graph, and $ H $ be a $l$-degenerate graph. 
When $ l = 1 $ or $ l = 2 $, the graph $ G {+}_{S} H $ is $2$-degenerate; when $ l \geq 3 $, the graph $ G {+}_{S} H $ is $l$-degenerate.
\end{theorem}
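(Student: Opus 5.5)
We will show the upper bound through an explicit deletion order, and the matching lower bound by exhibiting a subgraph of large minimum degree. This uses the characterization $d=\max\{\delta(K): K\subseteq G +_S H\}$ from Definition 2.4. We read ``$l$-degenerate'' in the statement as ``of degeneracy exactly $l$'', and we assume, as in the definition of the $F$-sum, that $G$ is connected with at least one edge. (If $G$ has no edges, $G+_S H$ is just $|V(G)|$ disjoint copies of $H$ and the case $l=1$ would fail.)

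\emph{Structure of $G+_S H$.} For $F=S$ we have $F(G)=S(G)$. The vertex set of $G+_S H$ splits into two kinds. An \emph{old} vertex $(v,h)$ with $v\in V(G)$ is adjacent to $(v,h')$ for every $hh'\in E(H)$, and to $(e,h)$ for every edge $e$ of $G$ incident with $v$. A \emph{subdivision} vertex $(e,h)$ with $e=uw\in E(G)$ keeps only its $S(G)$-edges, since the edges $E^*$ are removed. So $(e,h)$ is adjacent exactly to $(u,h)$ and $(w,h)$, and has degree $2$.

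\emph{Upper bound.} We first delete all subdivision vertices, each of degree $2$ at the time of deletion. What remains is the disjoint union of the $|V(G)|$ fibres $\{v\}\times V(H)$, each isomorphic to $H$. Each fibre is $l$-degenerate, so we finish by deleting each fibre along a degeneracy ordering of $H$, with degrees at most $l$. Hence $G+_S H$ is $\max\{2,l\}$-degenerate, which gives $2$ for $l\in\{1,2\}$ and $l$ for $l\ge 3$.

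\emph{Lower bound.} For $l\ge 3$, choose a subgraph $K\subseteq H$ with $\delta(K)=l$. The copy $\{v\}\times K$ inside any single fibre is a subgraph of $G+_S H$ with minimum degree $l$, so the degeneracy is at least $l$.

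For $l\in\{1,2\}$, $H$ has an edge $h_1h_2$, and $G$ has an edge $e=uv$. The six vertices
\[
(u,h_1),\ (e,h_1),\ (v,h_1),\ (v,h_2),\ (e,h_2),\ (u,h_2)
\]
form a $6$-cycle in $G+_S H$, taken in that cyclic order and closing with the edge $(u,h_2)(u,h_1)$. This cycle has minimum degree $2$, so the degeneracy is at least $2$.

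The only delicate point is the lower bound for $l=1$. Here $H$ is a forest, and one must see that the subdivision vertices nevertheless create cycles through two fibres; the $6$-cycle above settles this. The rest is routine bookkeeping of adjacencies from Definition 2.3.
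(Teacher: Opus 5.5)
Your upper-bound argument is the paper's proof. You delete the degree-$2$ subdivision vertices $(e,h)$, which leaves $|V(G)|$ disjoint copies of $H$, and then delete each copy along a degeneracy ordering of $H$; this gives $\max\{2,l\}$-degeneracy.

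The paper stops there, because under Definition~2.4 ``$k$-degenerate'' only means degeneracy at most $k$. Your lower bounds are correct extras: a copy $\{v\}\times K$ with $\delta(K)=l$, and the $6$-cycle through two fibres. They establish sharpness under your stronger ``exactly'' reading, which is the only reason you need $G$ to have an edge.
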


\noindent\textit{Proof:} 
We consider it from the following two cases:

\textbf{Case 1.} $l=1,2$, we remove all vertices of degree $2$ from graph $G {+}_{S} H$. First, remove the newly added vertices of degree 2 in the construction of $S(G)$ from graph $ G {+}_{S} H$. Subsequently, we obtain the union of $n$ subgraphs $H$.

Since each $H$ is either $1$-degenerate or $2$-degenerate,

we continue to remove vertices of degree less than or equal to $2$ step by step and thereby obtain an empty graph.

Therefore, the graph $ G {+}_{S} H $ is $2$-degenerate(see Figure 5 for $G = C_3,H = C_3$).

\begin{figure}[htbp]
	\centering
	\includegraphics[height=5.1cm, width=0.35\textwidth]{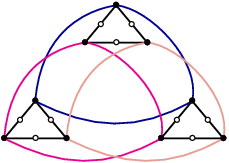}
	\caption{ $ C_3 {+}_{S} C_3$.}
\end{figure}

\textbf{Case 2.} $l\geq3$, we remove all vertices of degree less than or equal to $l$ from graph $ G {+}_{S} H $. First, remove the newly inserted degree 2 vertices in the construction of $ S(G) $  from graph $ G {+}_{S} H $.

 Subsequently, we obtain the union of $ n $ subgraphs $ H $. 
 
 Since each $ H $ is either $l$-degenerate($l\geq3$), 
 
 we continue to remove vertices of degree less than or equal to $l$ step by step and thereby obtain an empty graph.

 Therefore, the graph $ G {+}_{S} H $ is $l$-degenerate.

\noindent\textbf{Corollary 3.1.}\textit{ For any $ l \in \mathbb{N^+} $, let $G$ be an arbitrary graph, and $ H $ be a $l$-degenerate graph. Thus
\[
AT(G {+}_{S} H) \leq 
\begin{cases}
3, & \text{if } l=1,2; \\
l+1, & \text{if } l\geq3.
\end{cases}
\]
}

\noindent\textbf{Corollary 3.2.} \textit{Let $ G $ be an arbitrary graph, $H$ is a odd cycles, then $ AT(G {+}_{S} H) = 3 $.}

\noindent\textit{Proof:}
On the one hand, since graph $ H $ is a cycle, we have $ l = 2 $. According to Corollary 3.1, we know that $ AT(G {+}_{S} H) \leq 3 $. On the other hand, since graph $ H $ is an odd cycle, we have $ AT(G {+}_{S} H) \geq \chi (G {+}_{S} H)\geq \chi (H) = 3$. In summary, $ AT(G {+}_{S} H) = 3 $.

\noindent\textbf{Remark.} Therefore, the bound in Corollary~3.1 is tight when $l = 2$.

\noindent\textbf{Corollary 3.3.} \textit{For any $ n, m \in \mathbb{N} $, let $ P_n $ be a path with $ n $ vertices ($ n \geq 2) $, and $ P_m $ be a path with $ m $ vertices ($ m \geq 2) $. Then
\[
AT(P_n {+}_{S} P_m) = 
\begin{cases}
2, & \text{if } n=2,m=2; \\
3, & \text{else }.
\end{cases}
\]
}

\noindent\textit{Proof:} 
We consider it from the following two cases:

\textbf{Case 1.} $(n,m)=(2,2)$, $P_n {+}_{S} P_m$ is an even cycle, $ AT(P_n {+}_{S} P_m) = 2 $.

\textbf{Case 2.} $(n,m)\neq(2,2)$. Since graph $ P_n {+}_{S} P_m $ does not contain any odd cycles, graph $ P_n {+}_{S} P_m $ is a bipartite graph. The number of vertices is $ |V(P_n {+}_{S} P_m)| = (n + n - 1)m = 2nm - m $. The number of edges is $ |E(P_n {+}_{S} P_m)| = (n - 1)2m + n(m - 1) = 3nm - 2m -n $. By Lemma 2.2, we know that $AT(P_n {+}_{S} P_m) \geq \left\lceil \frac{3nm - 2m -n}{2nm - m} \right\rceil + 1 = 3.$ Furthermore, $P_m$ is $1$-degenerate, according to Corollary 3.1, $ AT(P_n {+}_{S} P_m) \leq 3 $. 
To sum up $ AT(P_n {+}_{S} P_m) = 3 $(see Figure 6 for $n = 5,m = 4$).

\begin{figure}[htbp]
	\centering
	\includegraphics[height=4.6cm, width=0.31\textwidth]{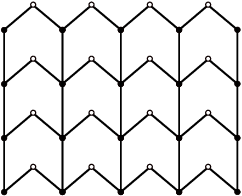}
	\caption{$ P_5 {+}_{S} P_4 $.}
\end{figure}

\noindent\textbf{Remark.} In fact, since $ AT(P_m) = 2 $ and both $ P_n $ and $ P_m $ are connected graphs, we can directly obtain $ AT(P_n {+}_{S} P_m) $ by applying the following Theorem 6.

\newpage
\noindent\textbf{Corollary 3.4.} \textit{For any $ n, m \in \mathbb{N} $, let $ C_n $ be a cycle with $ n $ vertices ($ n \geq 3) $, and $ P_m $ be a path with $ m $ vertices ($ m \geq 2) $. Then
\[
AT(C_n {+}_{S} P_m) = 3.
\]
}

\noindent\textit{Proof:} Since graph $C_n {+} _{S}P_m$ does not contain any odd cycles, graph $C_n {+} _{S}P_m$ is a bipartite graph. The number of vertices is $|V(C_n +_{S}P_m)| = 2nm$. The number of edges is $|E(C_n {+} _{S}P_m)| = 2nm + n(m - 1) = 3nm - n$. By Lemma 2.2, we know that $AT(C_n {+}_{S}P_m) \geq \left\lceil \frac{3nm - n}{2nm} \right\rceil + 1 = 3$. Furthermore, $P_m$ is $1$-degenerate, according to Corollary 3.1, $ AT(C_n {+}_{S} P_m) \leq 3 $. To sum up $ AT(C_n {+}_{S} P_m) = 3 $(see Figure 7 for $n = 4,m = 3$).

\begin{figure}[htbp]
	\centering
	\includegraphics[height=5cm, width=0.215\textwidth]{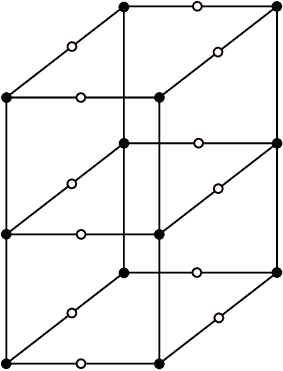}
	\caption{$ C_4 {+}_{S} P_3 $.}
\end{figure}

\noindent\textbf{Remark 1.} Since the Alon-Trasi of number Cartesian product of cycle and path is 3, $ AT(C_n {+}_{S} P_m) \leq 3. $ This way we have provided a method to find an upper bound for $AT(C_n {+}_{S} P_m)$ without using Corollary 3.1.

\noindent\textbf{Remark 2.}  In fact, since $ AT(P_m) = 2 $ and both $ C_n $ and $ P_m $ are connected graphs, we can directly obtain $ AT(C_n {+}_{S} P_m) = 3 $ by applying the following Theorem 6.

\noindent\textbf{Remark 3.} Therefore, the bound in Corollary~3.1 is tight when $l = 1$.

\noindent\textbf{Corollary 3.5.}\textit{ For any $ n, m \in \mathbb{N} $, star graph $ S_n $ is indeed a complete bipartite graph $ G[1,n] $($ n \geq 3 )$, and $ P_m $ be a path with $ m $ vertices ($ m \geq 2 )$. Then
\[
AT(S_n {+}_{S} P_m) = 3.
\]
}

\noindent\textit{Proof:} Since graph $ S_n {+}_{S} P_m $ does not contain any odd cycles,  graph $ S_n {+}_{S} P_m $ is a bipartite graph. The number of vertices is $ |V(S_n {+}_{S} P_m)| = (1 + n + n)m = 2nm + m $. The number of edges is $ |E(S_n {+}_{S} P_m)| = 2nm + (n + 1)(m - 1) = 3nm - n +m -1 $. By Lemma 2.2, we know that $AT(S_n {+}_{S} P_m) \geq \left\lceil \frac{3nm - n +m -1}{2nm + m} \right\rceil + 1 = 3.$ Furthermore, $P_m$ is $1$-degenerate, according to Corollary 3.1, $ AT(S_n {+}_{S} P_m) \leq 3 $.
To sum up $ AT(S_n {+}_{S} P_m) = 3 $(see Figure 8 for $n = 4,m = 3$).

\begin{figure}[htbp]
	\centering
	\includegraphics[height=5.6cm, width=0.248\textwidth]{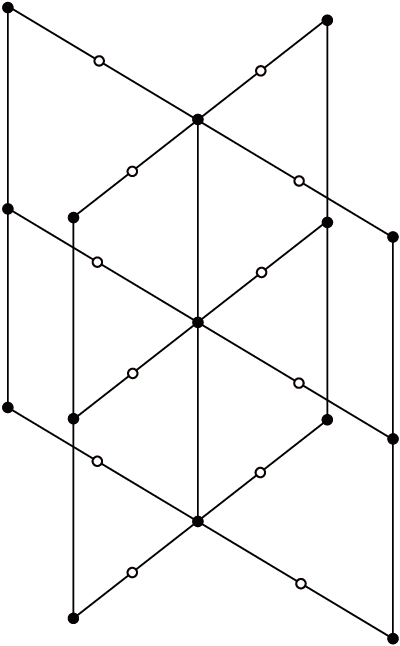}
	\caption{$ S_4 {+}_{S} P_3 $.}
\end{figure}

\noindent\textbf{Remark.} Actually, since $ AT(P_m) = 2 $ and both $ S_n $ and $ P_m $ are connected graphs, we can directly obtain $ AT(S_n {+}_{S} P_m) = 3 $ by applying the following Theorem 6.

\begin{theorem} For any $ k,l \in \mathbb{N^+} $, let $ G $ be  $k$-degenerate graph, and $ H $ be a $l$-degenerate graph. 
Then, the graph $ G {+}_{R} H $ is $(k+l)$-degenerate.
\end{theorem}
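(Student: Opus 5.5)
The plan is to give an explicit deletion order, in the spirit of the proof of Theorem 1, and then invoke Lemma 2.1 for the part that remains. The first step is to describe $G {+}_{R} H$ concretely. By the definition of $R(G)$, the edges of $R(G)$ are the original edges of $G$ together with, for each edge $e=xy\in E(G)$, the two edges joining the new vertex $e$ to $x$ and to $y$. In $G {+}_{R} H$ the set $E^*$ removes every $H$-edge at a new vertex. Hence a vertex $(e,v)$ with $e=xy\in E(G)$ has exactly the two neighbours $(x,v)$ and $(y,v)$, and so has degree $2$. The vertices $(u,v)$ with $u\in V(G)$ are joined to each other exactly as in $G\square H$. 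In other words, $G {+}_{R} H$ is obtained from $G\square H$ by attaching, for every $v\in V(H)$ and every edge $xy\in E(G)$, one new vertex of degree $2$ adjacent to $(x,v)$ and $(y,v)$.

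The second step is the deletion sequence. First delete all the vertices $(e,v)$ with $e\in E(G)$, one at a time. These vertices are pairwise nonadjacent, so each still has degree exactly $2$ when it is deleted. Since $k,l\in\mathbb{N}^+$, we have $2\le k+l$, so each such deletion is allowed in a $(k+l)$-degeneracy ordering. After these deletions the remaining graph is exactly $G\square H$. By Lemma 2.1, since $G$ is $k$-degenerate and $H$ is $l$-degenerate, $G\square H$ is $(k+l)$-degenerate. We therefore continue with a $(k+l)$-degeneracy ordering of $G\square H$, in which each vertex has at most $k+l$ remaining neighbours when it is deleted. Concatenating the two orderings deletes every vertex of $G {+}_{R} H$ with degree at most $k+l$ at its deletion time, so $G {+}_{R} H$ is $(k+l)$-degenerate.

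The only real point to check is the first step: that $E^*$ removes all $H$-direction edges at the new vertices. Without this, a vertex $(e,v)$ would have degree $2+\deg_H(v)$, possibly exceeding $k+l$, and the bound would fail. The hypothesis $k,l\ge 1$ is used only to guarantee $2\le k+l$. If $G$ has no edges there are no new vertices, and the statement reduces directly to Lemma 2.1.
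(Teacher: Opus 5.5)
Your proof is correct, and it is exactly the argument the paper uses in its Case 1 ($k=1$): delete the pairwise nonadjacent degree-$2$ vertices $(e,v)$, which leaves $G\square H$, and then apply Lemma 2.1. For $k\ge 2$ the paper instead shows that $R(G)$ itself is $k$-degenerate, applies Lemma 2.1 to $R(G)\square H$, and passes to the subgraph $G{+}_{R}H$; that route ignores $E^*$ but fails for $k=1$ (e.g.\ $R(P_2)\square P_2=K_3\square K_2$ is $3$-regular), whereas your use of $E^*$ together with $2\le k+l$ handles all $k$ uniformly and makes the case split unnecessary.
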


\noindent\textit{Proof:} 
We consider it from the following two cases:

\textbf{Case 1.} $ k = 1 $, graph $ G $ is a forest. Particularly, when graph $G$ is a tree. We first remove the newly created vertices of degree 2 when constructing $ R(G) $ in graph $ G {+}_{R} H $, therefore, graph $ G {+}_{R} H $ is reduced to the Cartesian product of graph $G$ and a graph $H$. Since the Cartesian product of $ G $ and $ H $ is $ (1+l) $-degenerate, by gradually removing vertices of degree less than or equal to $ (1+l) $ from graph $ G {+}_{R} H $, we can obtain an empty graph, thereby demonstrating that graph $ G {+}_{R} H $ is $ (1+l) $-degenerate(see Figure 9 for $G = T_6,H=P_3$). Moreover, since every forest is a subgraph of some tree, it follows that when the graph $ G $ is a forest, we also have $ G {+}_{R} H $ is $ (1+l) $-degenerate.

\begin{figure}[htbp]
	\centering
	\includegraphics[height=7cm, width=0.67\textwidth]{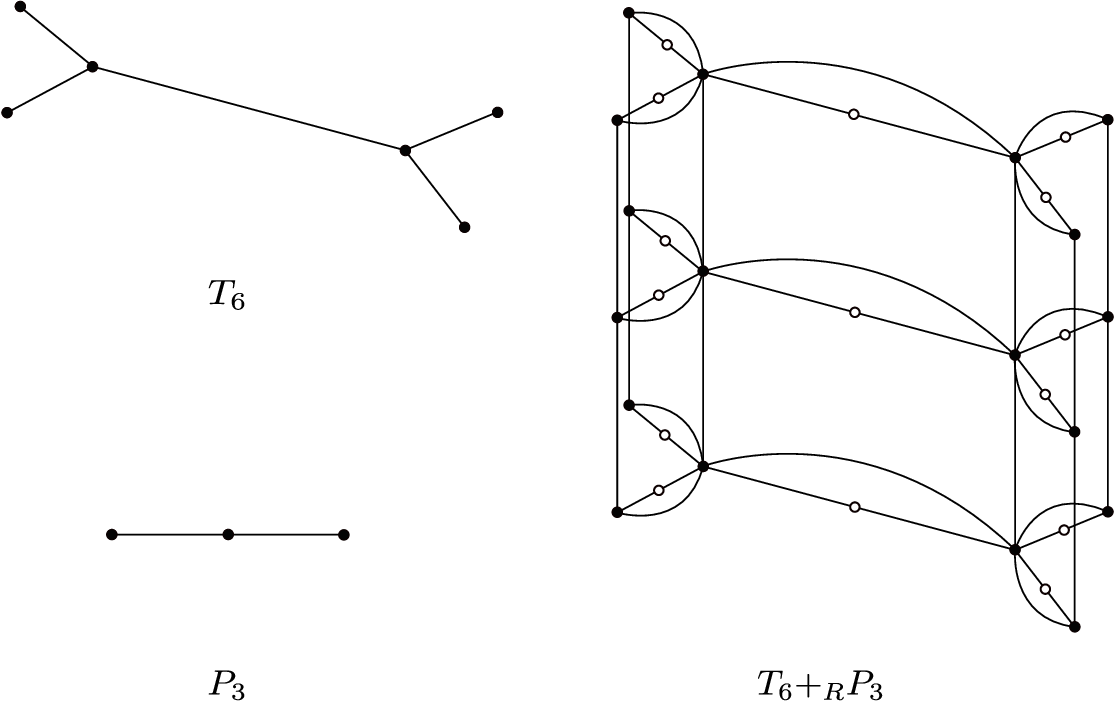}
	\caption{ $T_6$, $P_3$ and $ T_6 {+}_{R} P_3 $.}
\end{figure}

\textbf{Case 2.} $ k \geq 2 $, given that graph $ G $ is $ k $-degenerate, we first remove the newly created vertices of degree 2 when constructing $ R(G) $, therefore, graph $ R(G) $ is reduced to graph $ G $. Since graph $ G $ is $ k $-degenerate, continue to remove vertices with degrees less than or equal to $ k $, we eventually obtain an empty graph. Therefore, graph $ R(G) $ is $ k $-degenerate. Given that $ H $ possesses the property of being an $ l $-degenerate graph, according to Lemma 2.1, the Cartesian product $ R(G) \square H $ consequently exhibits $ (k+l)$-degenerate. Moreover, considering that graph $ G {+}_{R} H $ is a subgraph within the Cartesian product $ R(G) \square H $, it follows that graph $ G {+}_{R} H $ also maintains $ (k+l) $-degenerate.

\noindent\textbf{Corollary 3.6.}\textit{ For any $ k,l \in \mathbb{N^+} $, let $ G $ be a $k$-degenerate graph, and $ H $ be a $l$-degenerate graph. Then
\[
AT(G {+}_{R} H) \leq k+l+1.
\]
}

\noindent\textbf{Corollary 3.7.}\textit{ For any $ n, m \in \mathbb{N} $, let $ P_n $ be a path with $ n $ vertices ($ n \geq 2 )$, $ P_m $ be a path with $ m $ vertices ($ m \geq 2 )$. Then
\[
AT(P_n {+}_{R} P_m) = 3.
\]
}

\noindent\textit{Proof:} Since graph $ P_n {+}_{R} P_m $ does contain odd cycles, the chromatic number $ \chi(P_n {+}_{R} P_m) \geq 3$, then $ AT(P_n {+}_{R} P_m) \geq 3 $. Put it another way, $ P_n$ and $ P_m$ are $1$-degenerate, by  Corollary 3.6,  $ AT(P_n {+}_{R} P_m) \leq 3 $. We know that $ AT(P_n {+}_{R} P_m) = 3 $.

\noindent\textbf{Remark 1.} Similar to the proof of Corollary 3.7, for any $ n, m \in \mathbb{N} $, if the star graph $ S_n $ is indeed the complete bipartite graph $ G[1,n] $ ($ n \geq 3 $) and $ P_m $ is a path with $ m $ vertices ($ m \geq 2 $), then $ AT(S_n {+}_{R} P_m) = 3 $.

\noindent\textbf{Remark 2.} Similar to the proof of Corollary 3.7, for any $ n, m \in \mathbb{N} $, star graph $ S_n $ is indeed a complete bipartite graph $ G[1,n] $($ n \geq 3 )$, star graph $ S_m $ is indeed a complete bipartite graph $ G[1,m] $($ m \geq 3 )$, then $ AT(S_n {+}_{R} S_m) = 3 $.

\noindent\textbf{Remark 3.} Therefore, the inequality in Corollary~3.6 is tight.

\begin{theorem} Let the maximum degree of graph  G  be  $\Delta(G)$($(\Delta(G) > 1$), $ G $ be a $k$-degenerate graph and $ H $ be a $l$-degenerate graph. Then, the graph $ G {+}_{Q} H $ is $\max\{2\Delta(G)-2, k+l\}$-degenerate.
\end{theorem}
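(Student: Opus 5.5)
We will exhibit an explicit deletion order for $G {+}_{Q} H$ in which every deleted vertex has degree at most $\max\{2\Delta(G)-2, k+l\}$ at the moment it is removed. We first record the structure of the graph. The vertex set splits into the \emph{old} vertices $(u,v)$ with $u\in V(G)$, $v \in V(H)$, and the \emph{new} vertices $(e,v)$ with $e\in E(G)$, $v\in V(H)$.

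In $Q(G)$, a new vertex $e=xy$ is adjacent to its two endpoints $x,y$. It is also adjacent to the edges of $G$ sharing an endpoint with $e$, and there are $(d(x)-1)+(d(y)-1)\le 2\Delta(G)-2$ of these. By the definition of the $F$-sum, the edges of $H$ are not copied at the new vertices. Hence within $G {+}_{Q} H$ the neighbourhood of $(e,v)$ lies entirely in the layer $Q(G)\times\{v\}$. Its degree is therefore at most $2+2\Delta(G)-2=2\Delta(G)$.

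\textbf{Step 1: delete the new vertices.} We plan to delete them layer by layer, and inside each copy of $Q(G)$ in a suitable order, so that each one has at most $2\Delta(G)-2$ neighbours left when it is removed. The most natural attempt is to delete the new vertices of the copy $Q(G)\times\{v\}$ in an order along the line graph $L(G)$. A new vertex is adjacent to at most $2\Delta(G)-2$ other new vertices plus two old vertices. The two old endpoints push the count up to $2\Delta(G)$, so the real work is to absorb those $2$.

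The key observation is this. When $e$ is deleted, its neighbours that are still present are the two endpoints together with those adjacent edges not yet deleted. In any ordering of $E(G)$, the first edge removed still sees all of its neighbours, which may be as many as $2\Delta(G)$. Once a single edge has been removed, however, every later edge meeting it has lost at least one neighbour. So we should order the edges as in a degeneracy ordering of $L(G)$. The line graph $L(G)$ has maximum degree $2\Delta(G)-2$, and if $L(G)$ is not regular its degeneracy is strictly smaller. We also expect to use the hypothesis $\Delta(G)>1$ here.

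This counting is the main obstacle. A new vertex whose endpoints both have degree $\Delta(G)$ in a $\Delta$-regular $G$ has degree exactly $2\Delta(G)$, which exceeds $2\Delta(G)-2$ when $k+l$ is small. We would first try to handle this by deleting some old vertices earlier. Alternatively, we would check whether the bound $\max\{2\Delta(G)-2,k+l\}$ should compare with $k+l$ via the inequality $\Delta(G)\ge k$. For instance, when $G=C_3$ every new vertex has degree $4=2\Delta(G)$, while $\max\{2,k+l\}=2+l$, which is at least $4$ only when $l\ge2$. We expect a case split here, mirroring Case 1/Case 2 of Theorem 2. In the case where the new vertices cannot go first, we would remove old vertices according to the degeneracy order of $G\square H$ (Lemma 2.1) interleaved with the new ones.

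\textbf{Step 2: delete the remaining graph.} Once all new vertices are gone, what remains is exactly the subgraph of $G\square H$ on the old vertices, namely $G\square H$ itself. By Lemma 2.1 it is $(k+l)$-degenerate, so we finish by deleting vertices of degree at most $k+l$. Combining the two phases, every deleted vertex had degree at most $\max\{2\Delta(G)-2,k+l\}$, which proves the theorem, modulo the delicate count in Step 1.
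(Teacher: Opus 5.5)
\textbf{There is a genuine gap.} The step that carries all the difficulty is left open (``modulo the delicate count in Step 1''), and the repairs you sketch do not close it.

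Deleting all new vertices first fails in general. $L(G)$ can have degeneracy as large as $2\Delta(G)-2$ (e.g.\ when $G$ is regular), and non-regularity buys at most one unit, while the two old endpoints cost two. Your own $C_3$ example with $l=1$ shows this.

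Deleting all old vertices first also fails, because $(u,v)$ starts with degree up to $\Delta(G)+l$. For example, take the middle vertex of $P_3$ with $H=P_m$: it has degree at least $3$, while the target is $2$.

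Your fallback of interleaving with a degeneracy order of $G\square H$ rests on a false claim in Step 2. In $Q(G)$, as in $S(G)$, the original vertices are pairwise non-adjacent. So the old vertices of $G+_QH$ induce $|V(G)|$ disjoint copies of $H$, not $G\square H$. The final phase is therefore even easier ($l$-degenerate), but Lemma 2.1 plays no role. What you actually need is a rule for mixing the two classes of vertices.

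\textbf{The missing idea} is to schedule both classes along a $k$-degeneracy order of $G$.

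If $k\ge\Delta(G)$, delete each fibre $\{u\}\times V(H)$ in an $l$-degeneracy order of $H$. Each such vertex has at most $\Delta(G)$ new and $l$ old neighbours when deleted, so its degree is at most $\Delta(G)+l\le k+l$. What remains is $|V(H)|$ disjoint copies of $L(G)$, of maximum degree at most $2\Delta(G)-2$.

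If $k\le\Delta(G)-1$, take an order $u_1,\dots,u_n$ in which each $u_i$ has at most $k$ neighbours among $u_{i+1},\dots,u_n$. At step $i$, delete the fibre $\{u_i\}\times V(H)$ in an $l$-degeneracy order of $H$, then all surviving new vertices $(u_iz,v)$.
\begin{itemize}
\item The edges from $u_i$ to earlier vertices are already gone, so each $(u_i,v)$ has at most $k+l$ neighbours when deleted.
\item Each $(u_iz,v)$ is then adjacent only to $(z,v)$, to the layer-$v$ copies of at most $k-1$ other surviving edges at $u_i$, and to at most $\Delta(G)-1$ other edges at $z$. Its degree is therefore at most $k+\Delta(G)-1\le 2\Delta(G)-2$.
\end{itemize}

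This gives the bound $\max\{2\Delta(G)-2,k+l\}$, and the hypothesis $\Delta(G)>1$ is not needed. The paper's own proof only asserts that greedy deletion of low-degree vertices empties the graph, so an explicit ordering like this is what is needed to make either argument complete.
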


\noindent\textit{Proof:} When $2\Delta(G)-2 \geq k+l$, by gradually deleting vertices of degree less than or equal to $2\Delta(G)-2$ in graph $G {+}_{Q} H$, an empty graph can be obtained. Similarly, when $k+l > 2\Delta(G)-2$, by gradually deleting vertices of degree less than or equal to $l$ in graph $G {+}_{Q} H$, an empty graph can also be obtained, the graph $ G {+}_{Q} H $ is $\max\{2\Delta(G)-2, k+l\}$-degenerate(see Figure 10 for $G = S_5,H=P_3$).

\noindent\textbf{Remark 1.} When $\Delta(G)=0$, $G$ is an edgeless graph, so there is nothing to discuss.

\noindent\textbf{Remark 2.} When $\Delta(G) = 1$, the conclusion of Theorem 1 can be applied directly; hence, this case is not discussed in Theorem 3.

\begin{figure}[htbp]
	\centering
	\includegraphics[height=7.5cm, width=0.65\textwidth]{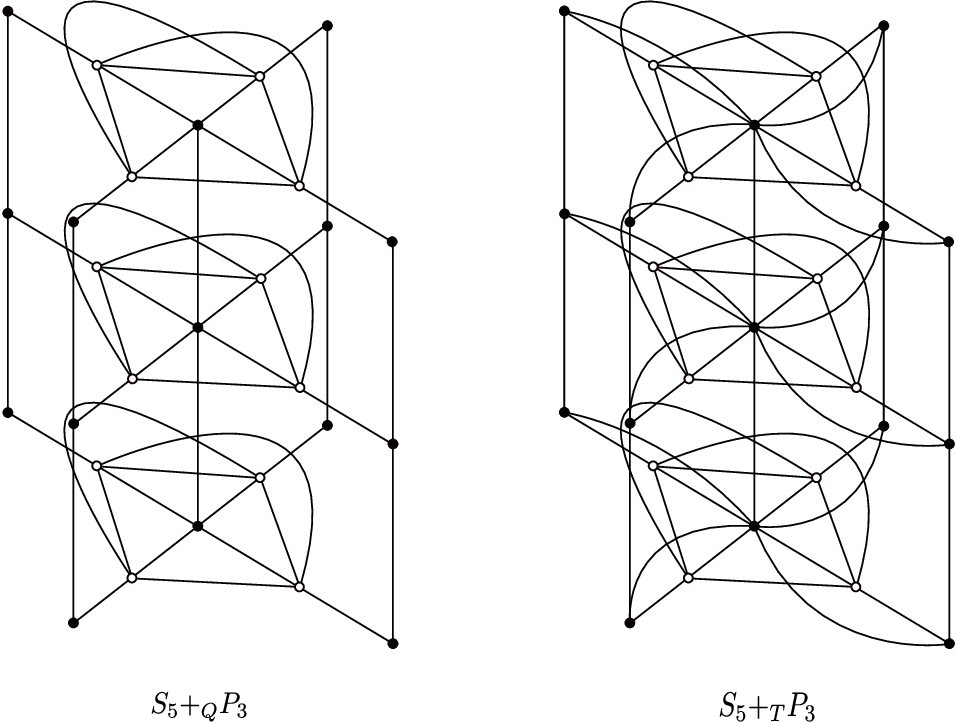}
	\caption{ $ S_5 {+}_{Q} P_3 $, $ S_5 {+}_{T} P_3 $.}
\end{figure}

\noindent\textbf{Corollary 3.8.}\textit{ Let the maximum degree of graph  G  be  $\Delta(G)$, $ G $ be a $k$-degenerate graph and $ H $ be a $l$-degenerate graph. Then
\[
AT(G {+}_{Q} H) \leq \max\{2\Delta(G)-2, k+l\}+1.
\]
}

\noindent\textbf{Corollary 3.9.}\textit{ For any $ n, m \in \mathbb{N} $, let $ P_n $ be a path with $ n $ vertices ($ n \geq 2) $, and $ P_m $ be a path with $ m $ vertices ($ m \geq 2) $. Then
\[
AT(P_n {+}_{Q} P_m) = 
\begin{cases}
2, & \text{if } n=2,m=2; \\
3, & \text{else }.
\end{cases}
\]
}

\noindent\textit{Proof:} 
We consider the problem from the following two cases:

\textbf{Case 1.} Observe the structure of the graph and by Corollary 3.1,
\[
AT(P_2 {+}_{Q} P_m) = AT(P_2 {+}_{S} P_m) = 
\begin{cases}
2, & \text{if } m=2; \\
3, & \text{else}.
\end{cases}
\]

\textbf{Case 2.} Since graph $ P_n {+}_{Q} P_m (n>2)$ does contain odd cycles, the chromatic number $ \chi(P_n {+}_{Q} P_m) \geq 3$, then $ AT(P_n {+}_{Q} P_m) \geq 3 $. On the flip side, since the maximum degree of $ P_n $ is 2, $ P_n $ and $ P_m $ is 1-degenerate, it follows from Corollary 3.8 that $ AT(P_n {+}_{Q} P_m) \leq 3$, therefore $ AT(P_n {+}_{Q} P_m) = 3. $

All in all 
\[
AT(P_n {+}_{Q} P_m) = 
\begin{cases}
2, & \text{if } n=2,m=2; \\
3, & \text{else }.
\end{cases}
\]
(see Figure 11 for $n = 5,m = 4$).

\begin{figure}[htbp]
	\centering
	\includegraphics[height=4.83cm, width=0.325\textwidth]{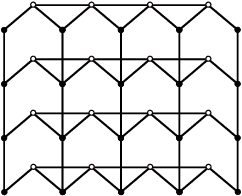}
	\caption{$ P_5 {+}_{Q} P_4 $.}
\end{figure}

\noindent\textbf{Remark.} Therefore, the inequality in Corollary~3.8 is tight.

\begin{theorem} Let the maximum degree of graph  G  be  $\Delta(G)$, $ G $ be a $k$-degenerate graph and $ H $ be a $l$-degenerate graph. Then, the graph $ G {+}_{T} H $ is $\max\{2\Delta(G), k+l\}$-degenerate.
\end{theorem}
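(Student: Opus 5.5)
We show that $G{+}_{T}H$ is $\max\{2\Delta(G),k+l\}$-degenerate by exhibiting an explicit deletion order in two phases. Put $M=\max\{2\Delta(G),k+l\}$. Throughout we use the structure of $G{+}_{T}H$: it has one copy $T(G)\times\{v\}$ of $T(G)$ for each $v\in V(H)$. For each $u\in V(G)$ the vertices $(u,v)$, $v\in V(H)$, span a copy of $H$. The edge-vertices $(e,v)$ with $e\in E(G)$ have no neighbours outside their own layer, because the edges in $E^*$ have been removed.

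\textbf{Phase 1: delete all edge-vertices.} Take $e=xy\in E(G)$ and $v\in V(H)$. Inside layer $v$, the vertex $(e,v)$ is adjacent to $(x,v)$ and $(y,v)$ (the $R$-part). It is also adjacent to $(e',v)$ for each edge $e'$ of $G$ sharing an endpoint with $e$ (the $Q$-part). The number of such $e'$ is
\[
(d_G(x)-1)+(d_G(y)-1)\le 2\Delta(G)-2 .
\]
Hence $\deg(e,v)\le 2+2\Delta(G)-2=2\Delta(G)\le M$ in the full graph. Degrees only decrease when vertices are deleted, so we may delete all edge-vertices one at a time, in any order, and each has degree at most $M$ when it is removed.

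\textbf{Phase 2: the remaining graph.} After Phase 1 the remaining graph is induced on $V(G)\times V(H)$. Two such vertices are adjacent exactly when they agree in the $H$-coordinate and are adjacent in $G$, or agree in the $G$-coordinate and are adjacent in $H$. This is precisely $G\square H$. By Lemma 2.1, $G\square H$ is $(k+l)$-degenerate, so it can be deleted vertex by vertex with each vertex of degree at most $k+l\le M$ when removed. Concatenating the two deletion orders gives the required order for $G{+}_{T}H$, so $G{+}_{T}H$ is $M$-degenerate.

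\textbf{Main obstacle.} The only real care needed is the degree count in Phase 1. We must confirm that an edge-vertex keeps exactly its two $R$-neighbours and its line-graph neighbours within its layer, and has no cross-layer neighbours; this is what removing $E^*$ guarantees. Note also that $G$ is simple, so no multi-adjacency inflates the count. As an alternative to the two-phase argument, $G{+}_{T}H$ is a subgraph of $T(G)\square H$, and Lemma 2.1 then gives a bound in terms of the degeneracy of $T(G)$. That bound is weaker, so the explicit two-phase ordering is the argument we would use.
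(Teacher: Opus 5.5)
Your proof is correct and uses the same greedy-deletion idea as the paper, but makes it explicit. The paper only asserts, separately in the cases $2\Delta(G)\ge k+l$ and $k+l>2\Delta(G)$, that repeatedly deleting low-degree vertices empties $G{+}_{T}H$; your single order (edge-vertices first, each of degree $d_G(x)+d_G(y)\le 2\Delta(G)$ with no cross-layer neighbours, then $G\square H$ via Lemma 2.1) supplies the missing degree count and removes the need for the case split, and it correctly uses the threshold $k+l$ where the paper's second case literally says ``degree at most $l$'', which would get stuck on, e.g., the $4$-regular graph $P_2\square K_4$ left over when $G=P_2$ and $H=K_4$.
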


\noindent\textit{Proof:} When $2\Delta(G) \geq k+l$, by gradually deleting vertices of degree less than or equal to $2\Delta(G)$ in graph $G {+}_{Q} H$, an empty graph can be obtained. Similarly, when $k+l > 2\Delta(G)$, by gradually deleting vertices of degree less than or equal to $l$ in graph $G {+}_{T} H$, an empty graph can also be obtained, the graph $ G {+}_{T} H $ is $\max\{2\Delta(G), k+l\}$-degenerate(see Figure 10 for $G = S_5, H = P_3$).

\noindent\textbf{Corollary 3.10.}\textit{ Let the maximum degree of graph  G  be  $\Delta(G)$, $ G $ be a $k$-degenerate graph and $ H $ be a $l$-degenerate graph. Then
\[
AT(G {+}_{T} H) \leq \max\{2\Delta(G)  , k+l\}+1.
\]
}

\noindent\textbf{Corollary 3.11.}\textit{ For any $ n, m \in \mathbb{N} $, let $ P_n $ be a path with $ n $ vertices ($ n \geq 2 ) $, and $ P_m $ be a path with $ m $ vertices ($ m \geq 2 )$. Then
\[
 AT(P_n {+}_{T} P_m) = 
\begin{cases}
3, & \text{else }; \\
4, & \text{if } n = 4 ,m \geq 5; n \geq 7 ,m = 2; n \geq 5 ,m \geq 3.
\end{cases}
\]
}

\noindent\textit{Proof:}
We consider the problem from the following five cases:

\textbf{Case 1.} $n = 2, m \geq 2$. $P_2 {+}_{T} P_m$ is identical to $P_2 {+}_{R} P_m$, according to Corollary 3.7, we know that $AT(P_2 {+}_{T} P_m) = 3 $.

\textbf{Case 2.} $n = 3, m \geq 2 $. When $ n = 3 $ and $ m = 2 $. On the one hand, we orient the edges of $P_3 {+}_{T} P_2$ as shown in Figure 12, we obtained an AT-orientation, at which point the maximum out-degree is 2, consequently $AT(P_2 {+}_{T} P_3) \leq 3$. On the other hand, $AT(P_3 {+}_{T} P_2) \geq \chi(P_3 {+}_{T} P_2) = 3$, hence $AT(P_3 {+}_{T} P_2) = 3$.

When $ n = 3 $ and $ m > 2 $. By copying the orientation of the lower layer of graph $P_3 {+}_{T} P_2$ (where $P_3 {+}_{T} P_2$ is divided into upper and lower layers), a new orientation is obtained as shown in Figure~12(see Figure 12 for $P_3 {+}_{T} P_3 $ and $P_3 {+}_{T} P_4$), we can obtain $AT(P_3 {+}_{T} P_m) \leq 3$. On the other hand, $AT(P_3 {+}_{T} P_m) \geq \chi(P_3 {+}_{T} P_m) = 3$, hence $AT(P_3 {+}_{T} P_m) = 3$.

\begin{figure}[htbp]
	\centering
	\includegraphics[height=5.5cm, width=0.68\textwidth]{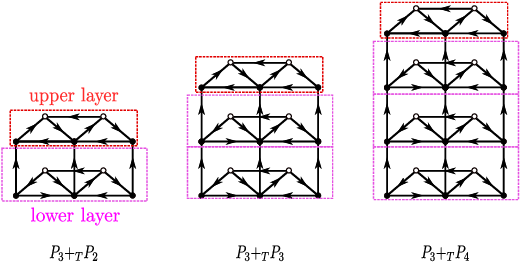}
	\caption{$ P_3 {+}_{T} P_2 $, $ P_3 {+}_{T} P_3 $ and $ P_3 {+}_{T} P_4 $.}
\end{figure}

\textbf{Case 3.} $n=4, m=2$; $n=4, m=3$; $n=4, m=4$. 
When $n=4, m=4$, the graph $ G $ is shown in Figure 13 below. The graph polynomial is given by: $f(x) = (x_1-x_2)(x_1-x_4)(x_1-x_5)(x_2-x_3)(x_2-x_5)(x_2-x_6)(x_3-x_6)(x_3-x_7)
(x_4-x_5)(x_5-x_6)(x_6-x_7)
(x_4-x_{11})(x_5-x_{12})(x_6-x_{13})(x_7-x_{14})
(x_8-x_9)(x_8-x_{11})(x_8-x_{12})(x_9-x_{10})(x_9-x_{12})(x_9-x_{13})(x_{10}-x_{13})(x_{10}-x_{14})
(x_{11}-x_{12})(x_{12}-x_{13})(x_{13}-x_{14})
(x_{11}-x_{18})(x_{12}-x_{19})(x_{13}-x_{20})(x_{14}-x_{21})
(x_{15}-x_{16})(x_{15}-x_{18})(x_{15}-x_{19})(x_{16}-x_{17})(x_{16}-x_{19})(x_{16}-x_{20})(x_{17}-x_{20})(x_{17}-x_{21})
(x_{18}-x_{19})(x_{19}-x_{20})(x_{20}-x_{21})
(x_{18}-x_{25})(x_{19}-x_{26})(x_{20}-x_{22})(x_{21}-x_{28})
(x_{22}-x_{23})(x_{22}-x_{25})(x_{22}-x_{26})(x_{23}-x_{24})(x_{23}-x_{26})(x_{23}-x_{27})(x_{24}-x_{27})(x_{24}-x_{28})
(x_{25}-x_{26})(x_{26}-x_{27})(x_{27}-x_{28})$, the coefficient of the monomial \[
x_1^2x_2^2x_3^2x_4^2x_5^2x_6^2x_7^2x_8^2x_9^2%
x_{10}^2x_{11}^2x_{12}^2x_{13}^2x_{14}^2x_{15}^2x_{16}^2%
x_{17}^2x_{18}^2x_{19}^2x_{20}^2x_{21}^2x_{22}^2x_{23}^2%
x_{24}^2x_{25}^2x_{26}^2x_{27}^2x_{28}^2
\] is $ 12 $(the Python implementation is provided in the appendix), we know that $AT(P_4 {+}_{T} P_4) \leq 3$.  Additionally, $\chi(P_4 {+}_{T} P_4) \geq 3$, then $AT(P_4 {+}_{T} P_4) = 3$.
Moreover, we note that
\[
3 = AT(P_4 {+}_{T} P_4) \geq AT(P_4 {+}_{T} P_3) \geq AT(P_4 {+}_{T} P_2) \geq \chi(P_4 {+}_{T} P_2) = 3.
\]
Hence $AT(P_4 {+}_{T} P_2) = AT(P_4 {+}_{T} P_3) = AT(P_4 {+}_{T} P_4) = 3$.

\textbf{Case 4.} $n=5, m=2$; $n=6, m=2$. Similar to the proof of Case 3, then $AT(P_5 {+}_{T} P_2) = AT(P_6 {+}_{T} P_2) = 3$. 

\textbf{Case 5.} $n = 4 ,m \geq 5$; $n \geq 7 ,m = 2$; $n \geq 5 ,m \geq 3$. On the one hand, we claim that every orientation $D$ of $ P_n {+}_{T} P_m $ such that $ d^+_D(v) \geq 3$. Assume that there exists a vertex whose out-degree satisfies $ d^+_D(v) \leq 2$, then
\[
5nm-5m-n = |E(G)| = \sum_{v \in V(G)} d^+_D(v) \leq 2|V(G)| = 2(2nm-m) = 4nm-2m,
\] 
which implies that $(n-3)(m-1)\leq3$, a contradiction. So $AT(P_n {+}_{T} P_m) \geq 4$.

On the other hand, after successively removing all vertices of degree three from graph $ P_n {+}_{T} P_m $, obtaining an empty graph, it follows that graph $ P_n {+}_{T} P_m $ is $3$-degenerate. We know that $ AT(P_n {+}_{T} P_m) \leq 4. $
Overall $AT(P_n {+}_{T} P_m)=4$(see Figure 13 for $ P_5 {+}_{T} P_4 $).

\begin{figure}[htbp]
	\centering
	\includegraphics[height=5.4cm, width=0.68\textwidth]{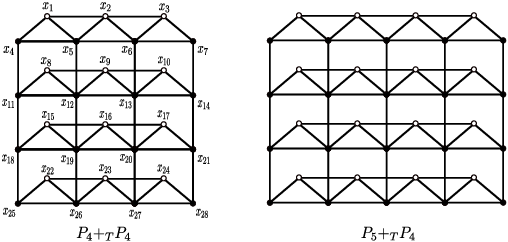}
	\caption{ $ P_4 {+}_{T} P_4 $, $ P_5 {+}_{T} P_4 $.}
\end{figure}

\begin{theorem} 
Let $K_n$ be a complete graph, then $ \text{AT}(S(K_n)) = 3 $.
\end{theorem}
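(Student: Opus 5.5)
S(K_n) is bipartite (original vertices versus subdivision vertices), so Lemma 2.2 applies: every orientation is an AT-orientation and
\[
AT(S(K_n))=\max_{H\subseteq S(K_n)}\left\lceil \frac{|E(H)|}{|V(H)|}\right\rceil+1 .
\]
I therefore want to show that this maximum density term equals $2$. I implicitly assume $n\ge 3$; for $n=2$, $S(K_2)=P_3$ has $AT=2$, so the statement needs $n\ge3$.

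For the \emph{upper bound} I will give an explicit orientation of $S(K_n)$ with maximum outdegree at most $2$. Every subdivision vertex $w_e$ for $e=uv$ has degree $2$, so I orient both of its edges out of it: $w_e\to u$ and $w_e\to v$. Then $w_e$ has outdegree $2$ and each original vertex has outdegree $0$. Bipartiteness and Lemma 2.2 make this an AT-orientation, so $AT(S(K_n))\le 3$. Alternatively, I can note that $S(K_n)$ is $2$-degenerate: delete all subdivision vertices, each of degree $2$, leaving isolated vertices. The coloring-number remark in Section 2 then gives $AT\le 3$ directly.

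For the \emph{lower bound}, by Lemma 2.2 (or Corollary 2.1, since $S(K_n)$ is connected, bipartite, and contains no odd cycles) it suffices that $|E|>|V|$. Here $|V(S(K_n))|=n+\binom n2$ and $|E(S(K_n))|=2\binom n2=n(n-1)$. The inequality $n(n-1)>n+n(n-1)/2$ reduces to $n(n-1)/2>n$, i.e.\ $n>3$. This settles $n\ge 4$, giving $\lceil |E|/|V|\rceil+1\ge 3$ and hence $AT\ge 3$.

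The main obstacle is the boundary case $n=3$. There $S(K_3)=C_6$, which has $|E|=|V|$, so Corollary 2.1 and Lemma 2.4 give $AT(S(K_3))=2$, contradicting the statement. I would handle this by restricting the theorem to $n\ge 4$, or I would note that the claimed value $3$ holds exactly for $n\ge4$. In that range the upper and lower bounds match and give $AT(S(K_n))=3$.
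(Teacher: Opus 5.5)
Your proof is correct and is essentially the paper's argument: the same orientation (every subdivision vertex pointing to its two original endpoints) gives $AT(S(K_n))\le 3$, and the same density bound from Lemma 2.2, applied to the whole graph, gives $AT(S(K_n))\ge 3$. Your boundary analysis is also right and catches a genuine error in the paper: the paper asserts $\left\lceil \frac{n(n-1)}{n+n(n-1)/2}\right\rceil+1=3$ for all $n$, but at $n=3$ the ratio is $6/6=1$, and indeed $S(K_3)=C_6$ has $AT=2$ (as does $S(K_2)=P_3$), so the theorem holds exactly for $n\ge 4$.
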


\noindent\textit{Proof:} On the one hand, assign a direction to each edge of the graph $ S(K_n) $ such that the newly constructed vertex points to the original vertex, we obtain an $AT$-$orientation$ with maximum out-degree of 2, which implies that $AT(S(K_n)) \leq 3$. On the other hand, since graph $S(K_n)$ contains no odd cycles, $S(K_n)$ is a bipartite graph. According to Lemma 2.2, $AT(S(K_n)) \geq \left\lceil \frac{n(n-1)}{n+\frac{n(n-1)}{2}} \right\rceil + 1 = 3$(see Figure 14 for $K_4$ and  $S(K_4)$). In conclusion, $AT(S(K_n)) = 3$.

\begin{figure}[htbp]
	\centering
	\includegraphics[height=3.2cm, width=0.39\textwidth]{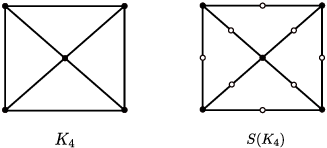}
	\caption{ $K_4$, $S(K_4)$.}
\end{figure}

\begin{theorem} 
Let $ G $ and $ H $ be any two connected graphs and $ \text{AT}(H) = k $. Then
\[
AT(G {+}_{S} H) = 
\begin{cases}
2, & \text{if } k=2 \text{ and } G=H=P_2; \\
3, & \text{if } k=2 \text{ and } G,H \neq P_2; \\
k, & \text{if } k \geq 3.
\end{cases}
\]
\end{theorem}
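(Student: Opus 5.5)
The plan is to sandwich $AT(G {+}_{S} H)$ between a lower bound coming from a copy of $H$ (or from Lemma 2.2 in the bipartite case) and an upper bound coming from an explicit orientation.

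Recall the structure of $G {+}_{S} H$. For every $u\in V(G)$ it contains a copy $H_u=\{u\}\times V(H)$ of $H$. For every edge $e=uv\in E(G)$ and every $w\in V(H)$ it contains a path $(u,w)(e,w)(v,w)$, and its middle vertex $(e,w)$ has degree exactly $2$. There are no other edges. I assume throughout that $G$ has at least one edge; if $G=K_1$ then $G {+}_{S} H=H$ and there is nothing to prove.

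\textbf{Lower bounds.} Since $H_u$ is a subgraph of $G {+}_{S} H$ and the Alon-Tarsi number is monotone under taking subgraphs, $AT(G {+}_{S} H)\ge AT(H)=k$. This already gives the lower bound when $k\ge 3$.

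For $k=2$ we have $\chi(H)\le AT(H)=2$, so $H$ is bipartite and has at least one edge. The graph $G {+}_{S} H$ is then bipartite: give $(u,w)$ the colour of $w$ in a proper $2$-colouring of $H$, and give each subdivision vertex $(e,w)$ the opposite colour. This allows Lemma 2.2 to be used, so it suffices to exhibit a subgraph with more edges than vertices whenever $(G,H)\neq(P_2,P_2)$. Two configurations cover this, using connectivity.
\begin{itemize}
\item If $G$ has a path $u v x$ and $H$ has an edge $ab$, take the three copies of $ab$ together with the four length-$2$ paths joining them. This subgraph has $10$ vertices and $11$ edges.
\item If $G=P_2$ and $H$ has a path $abc$, take the two copies of $abc$ together with the three joining paths. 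This subgraph has $9$ vertices and $10$ edges.
\end{itemize}
In either case Lemma 2.2 gives $AT(G {+}_{S} H)\ge 3$.

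When $G=H=P_2$, the graph $G {+}_{S} H$ is a $6$-cycle, so its value $2$ follows from Lemma 2.4.

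\textbf{Upper bound.} Fix an AT-orientation $D_H$ of $H$ with maximum outdegree $k-1$ and copy it onto every $H_u$. Orient both edges at each subdivision vertex $(e,w)$ away from it, so that $(e,w)$ has outdegree $2$ and indegree $0$. The original vertices gain only in-arcs, so the maximum outdegree of the resulting orientation $D$ is $\max\{k-1,2\}$.

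The step needing care is the verification that $D$ is Alon-Tarsi. A vertex with indegree $0$ lies in no nonempty Eulerian subgraph, so every Eulerian subgraph of $D$ avoids the subdivision vertices. It therefore lives in the disjoint union $\bigcup_u H_u$. Eulerian subgraphs of a disjoint union correspond bijectively to tuples of Eulerian subgraphs of the parts, and the arc counts add. Hence
\[
\mathrm{diff}(D)=\prod_{u\in V(G)}\mathrm{diff}(D_H)\neq 0 .
\]
This gives $AT(G {+}_{S} H)\le \max\{k,3\}$. It is consistent with, and sharpens, Corollary 3.1.

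\textbf{Conclusion.} For $k\ge 3$ the upper bound $k$ matches the lower bound $k$. For $k=2$ with $(G,H)\neq(P_2,P_2)$ the upper bound $3$ matches the bipartite lower bound $3$. In the remaining case $G=H=P_2$ the value is $2$.

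The main obstacle is the product formula for $\mathrm{diff}$ in the upper bound: one must check that the sources really kill every cross-copy Eulerian subgraph. I would also state explicitly that the hypothesis ``$G,H\neq P_2$'' in the statement is to be read as ``not both equal to $P_2$''.
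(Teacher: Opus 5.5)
Your proof is correct. For $k\ge 3$ it follows the paper's argument. The lower bound comes from a copy $H_u$ and subgraph monotonicity. The upper bound comes from copying an AT-orientation of $H$ onto every copy and making each subdivision vertex a source. You also justify why this orientation is Alon--Tarsi: sources carry no arc of any Eulerian subdigraph, so $\mathrm{diff}(D)=\mathrm{diff}(D_H)^{|V(G)|}$. The paper only asserts this.

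You differ from the paper in the case $k=2$. The paper gets $AT\le 3$ from $2$-degeneracy: it deletes the subdivision vertices and then uses Lemma 2.4 to say that $H$ is a tree or an even unicyclic graph. It gets $AT\ge 3$ by remarking that $G {+}_{S} H$ ``contains at least two cycles'' and applying Lemma 2.4 to the whole graph. You instead reuse the same source orientation, which gives the uniform bound $\max\{k,3\}$ at once. You also replace the vague two-cycle remark with explicit subgraphs, one with $10$ vertices and $11$ edges and one with $9$ vertices and $10$ edges, whose existence you derive from connectivity. This makes the case analysis behind ``$(G,H)\neq(P_2,P_2)$'' fully visible, and your ``not both'' reading matches the paper's Case 2. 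You could even drop the bipartiteness check and Lemma 2.2. Summing outdegrees over a subgraph with more edges than vertices shows that no orientation has all outdegrees at most $1$, so $AT\ge 3$ follows by counting alone.

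One small correction: for $G=K_1$ it is not true that ``there is nothing to prove''. There $G {+}_{S} H=H$, so for $k=2$ the statement would assert the value $3$, while the true value is $AT(H)=2$. Excluding this case is right, and the paper's proof does so tacitly. But it is a necessary restriction of the statement (``$G$ has an edge''), not a trivial case.
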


\noindent\textit{Proof:} 
We consider it from the following two cases:

\textbf{Case 1.} When $ k = 2 $ and both graphs $ G $ and $ H $ are the path $ P_2 $, the graph $ G {+}_{S} H $ is an even cycle, so $ AT(G {+}_{S} H) = 2 $.

\textbf{Case 2.} When $ k = 2 $ and neither graph $ G $ nor graph $ H $ can both be the path $ P_2 $. On the one hand, delete the vertices of degree 2 in the graph $ G {+}_{S} H $ of $ S(G) $, thereby obtaining some disjoint unions of graphs $ H $. According to Lemma 2.4, the structure of graph $H$ is known, we then continue deleting vertices of degree at most 2 to obtain an empty graph. Therefore, the graph  $ G {+}_{S} H $  is 2-degenerate, then $ AT(G {+}_{S} H) \leq 3 $. On the other hand, since in this case the graph $G {+}_{S} H$ contains at least two cycles, it follows from Lemma 2.4 that $AT(G {+}_{S} H) \neq 2$, i.e., $AT(G {+}_{S} H) \geq 3$. In conclusion, $ AT(G {+}_{S} H) = 3 $.

\textbf{Case 3.} When $ k \geq 3 $, on the one hand, since $ AT(H) = k $, there must exist an $AT$-$orientation$ of $ H $ such that the maximum out-degree of $ H $ is $ k-1 $. We keep this orientation unchanged in graph $ G {+}_{S} H $ and continue to orient the remaining $ S(G) $ as follows: when constructing $ S(G) $, the newly added vertices  are oriented towards the original vertices in $ G $. Thus, we obtain an $AT$-$orientation$ of graph $ G {+}_{S} H $ with a maximum out-degree of $ k-1 $, implying that $ AT(G {+}_{S} H) \leq k $. On the other hand, since $ H $ is a subgraph of $ G {+}_{S} H $, we have $ AT(G {+}_{S} H) \geq AT(H) = k $. Therefore, combining both aspects, we conclude that $ AT(G {+}_{S} H) = k $.

\section{Conclusions}

In this study, we have derived the range of $ AT(G {+}_{F} H) $ based on $ k $-degenerate. Subsequently, since connected $ S(G) $ contains no odd cycles, we have $ \chi (S(G)) = 2 $. Combining with Theorem 5, it follows that $ S(G) $ is not a chromatic-$AT$ choosable graph. Since connected $ G {+}_{S} H $ also contains no odd cycles, we have $ \chi (G {+}_{S} H) = 2 $. Combining with Theorem 6, it follows that $ G {+}_{S} H $ is not a chromatic-$AT$-choosable graph except when $ G = H = P_2 $. Additionally, according to Lemma 2.4, we obtain an equivalent characterization for $ AT(G) = 2 $. Finally, in the process of proving the theorem, we have also obtained relevant conclusions on some specific problems.

\noindent\textbf{Question.} First, for Corollary~3.1, is the result tight when $l \ge 3$? The same question applies to the bound in Corollary~3.10,  is that result tight? Second, assuming $AT(G)=k$ and $AT(H)=l$, is the Alon--Tarsi number of the $G{+}_{R}H$, $G{+}_{Q}H$, and $G{+}_{T}H$ determined solely by $k$ and $l$? Finally, are the graphs $G{+}_{R}H$, $G{+}_{Q}H$, and $G{+}_{T}H$ always chromatic--$AT$ choosable?

\section{Acknowledgement}

This work was supported in part by the National Natural Science Foundation of China (No. 12571345) and the Natural Science Foundation of Hebei Province, China (No. A2021202013). We would like to thank the anonymous referee for any helpful comments and suggestions.

\section{Appendix}
The Python implementation in Corollary 4.11, Case 3:

\begin{lstlisting}[style=PythonStyle]
import time
from collections import defaultdict

def compute_target_coefficient(edge_list, num_vars=28):
    var_edge_count = defaultdict(int)
    for i, j in edge_list:
        var_edge_count[i] += 1
        var_edge_count[j] += 1

    for var in range(1, num_vars + 1):
        if var_edge_count.get(var, 0) < 2:
            return 0

    current_counts = defaultdict(int)
    total = 0

    def backtrack(edge_idx, sign):
        nonlocal total
        if edge_idx == len(edge_list):
            for var in range(1, num_vars + 1):
                if current_counts[var] != 2:
                    return
            total += sign
            return

        i, j = edge_list[edge_idx]

        if current_counts[i] < 2:
            current_counts[i] += 1
            backtrack(edge_idx + 1, sign)
            current_counts[i] -= 1

        if current_counts[j] < 2:
            current_counts[j] += 1
            backtrack(edge_idx + 1, sign * (-1))
            current_counts[j] -= 1

    backtrack(0, 1)
    return total


edge_list = 
[
    (1, 2), (1, 4), (1, 5), (2, 3), (2, 5), (2, 6), (3, 6), (3, 7), 
    (4, 5), (5, 6), (6, 7), (4, 11), (5, 12), (6, 13), (7, 14), (8, 9), 
    (8, 11), (8, 12), (9, 10), (9, 12), (9, 13), (10, 13), (10, 14), 
    (11, 12), (12, 13), (13, 14),(11, 18), (12, 19), (13, 20), 
    (14, 21), (15, 16), (15, 18), (15, 19), (16, 17), (16, 19), 
    (16, 20), (17, 20), (17, 21),(18, 19), (19, 20), (20, 21), 
    (18, 25), (19, 26), (20, 22), (21, 28), (22, 23), (22, 25), 
    (22, 26), (23, 24), (23, 26), (23, 27), (24, 27), (24, 28), 
    (25, 26), (26, 27), (27, 28)
]

start = time.time()
coeff = compute_target_coefficient(edge_list)
end = time.time()

print(f"Coefficient of x1^2 x2^2 ... x28^2: {coeff}")
\end{lstlisting}


\begin{thebibliography}{99}
	\bibitem{BS}
	T. R. Jensen and B. Toft. \emph{Graph coloring problems}[M]. Wiley,  New York, 1995.

	\bibitem{EN}
	ELIASI M, TAERI B. Four new sums of graphs and their Wiener indices[J]. Discrete Applied Mathematics, 2008, 157(4): 794-803. 
	
	\bibitem{EN}
	Z. G. Li, Z. L. Shao, F. Petrov, and A. Gordeev. The Alon-Tarsi number of a toroidal grid[J]. European Journal of Combinatorics,111(2023), 103697.
	
	\bibitem{EN}
	Kaul H, Mudrock J A. On the Alon - Tarsi Number and Chromatic - Choosability of Cartesian Products of Graphs[J]. The Electronic Journal of Combinatorics, 2019, 26(1): \#P1.3.
	
	\bibitem{EN}
	D. Sarala, H. Y. Deng, S. K. Ayyaswamy, S. Balachandran. The Zagreb indices of graphs based on four new operations related 
to the lexicographic product[J]. Applied Mathematics and Computation 309 (2017) 156–169.

	\bibitem{EN}
	X. D. Zhu and R. Balakrishnan. \emph{Combinatorial Nullstellensatz With Applications to Graph Colouring}[M]. Chapman and Hall/CRC, 2021.

	\bibitem{EN}
	N. Alon and M. Tarsi. Colorings and orientations of graphs[J]. Combinatorica, 12(2) (1992): 125 - 134.
	
	\bibitem{EN}
	Eric Culver and Stephen G. Hartke. Relation between the correspondence chromatic number and the Alon-Tarsi number[J]. Discrete Mathematics, 346.6 (2023): 113347.
	
	\bibitem{EN}
	Z. S. Liu and Mai Tumurzhuo Maisidike. Linear Arboricity of the Product Graph of $1$-Degenerate Graphs[J]. Shandong University, vol,  1671-9352(2025)02-0051-12.


	\bibitem{EN}
	N. Eaton and T. Hull. Defective List Colorings of Planar Graphs[J].  \emph{Bull. inst. combin. appl}, 1999, 25(25):79$-$87.


	\bibitem{EN}
     AWAIS, H. M.; JAVAID, Muhammad; ALI, Akbar. First General Zagreb Index of Generalized $F$-sum Graphs[J]. Discrete Dynamics in Nature and Society, 2020, 2020.1: 2954975.

	\bibitem{EN}
Erdos P, Rubin A L, Taylor H. Choosability in graphs[J]. Congr. Numer, 1979, 26(4): 125-157.
\end{thebibliography}
\end{document}